\documentclass[12pt]{amsart}

\usepackage{graphicx, overpic}
\usepackage[below]{placeins}
\usepackage[colorlinks=true, linkcolor=blue, citecolor=blue]{hyperref}
\usepackage[]{algorithm2e}
\usepackage[textsize=tiny]{todonotes}

\usepackage[T1]{fontenc}
\usepackage{amsmath,amsthm,amscd,amssymb,eucal}
\usepackage{enumerate, amsfonts, latexsym, color, url}
\usepackage{fancyvrb}
\CustomVerbatimCommand{\codestyle}{Verb}{formatcom=\ttfamily}
\usepackage{epstopdf}
\usepackage{pinlabel}
\usepackage{calrsfs}
\DeclareMathAlphabet{\pazocal}{OMS}{zplm}{m}{n}
\usepackage{tikz}
\usepackage{hyperref}

\DeclareMathOperator{\diam}{\mathrm{diam}}

\begin{document}

\newtheorem{theorem}{Theorem}[section]
\newtheorem*{theorem_A}{Theorem A}
\newtheorem*{theorem_B}{Theorem B}
\newtheorem{lemma}[theorem]{Lemma}
\newtheorem{corollary}[theorem]{Corollary}
\newtheorem{proposition}[theorem]{Proposition}
\newtheorem*{conjecture_main}{Conjecture}
\newtheorem{question}[theorem]{Question}

\theoremstyle{definition}
\newtheorem{definition}[theorem]{Definition}
\newtheorem{convention}[theorem]{Convention}
\newtheorem{notation}[theorem]{Notation}

\theoremstyle{remark}
\newtheorem{remark}[theorem]{Remark}
\newtheorem{example}[theorem]{Example}

\def\id{\text{id}}
\def\Z{\mathbb Z}
\def\N{\mathbb N}
\def\R{\mathbb R}
\def\C{\mathbb C}
\def\CC{\mathcal{C}}
\def\RR{\mathcal R}
\def\PP{\mathcal P}
\def\D{\mathbb D}
\def\Mod{\textnormal{Mod}}
\def\Diff{\textnormal{Diff}}
\def\length{\textnormal{length}}
\def\cost{\textnormal{cost}}
\def\inte{\textnormal{int}}
\def\myepsilon{\eta}

\title{Entropy and Translation Length in the Ray Graph}

\author{Juliette Bavard}
\address{Department of Mathematics \\ Univ Rennes \\ CNRS, IRMAR - UMR 6625, F-35000 Rennes, France}
\email{juliette.bavard@univ-rennes1.fr}
\author{Danny Calegari}
\address{Department of Mathematics \\ University of Chicago \\
Chicago, Illinois, 60637}
\email{dannyc@uchicago.edu}
\author{Alden Walker}
\address{Center for Communications Research \\ La Jolla, CA 92121, USA}
\email{akwalke@ccr-lajolla.org}
\date{\today}

\begin{abstract}
Let $\Gamma$ be the mapping class group of the plane minus a Cantor set, acting
on the ray graph $\RR$, and for $\gamma \in \Gamma$ let $\tau(\gamma)$ denote the
translation length of $\gamma$ on $\RR$.  We prove that
$\tau(\gamma) \le h(f)/\log{2}$ for every $C^\infty$ diffeomorphism $f$ of $S^2$
representing $\gamma$, where $h$ denotes topological entropy.  

The proof falls into two parts: a combinatorial argument bounding distance between
two rays in $\RR$ by the logarithm of the geometric intersection number; and a geometric
argument that promotes geometric control (coarse length of iterates of a fixed ray)
to combinatorial control (geometric intersection number).
We conjecture that the $C^\infty$ hypothesis on $f$ can be removed.
\end{abstract}

\maketitle

\section{Introduction}\label{section:introduction}

Let $K \subset \R^2$ be a Cantor set and let $\Gamma := \Mod(\R^2-K)$ be the
mapping class group of the plane minus $K$. The group $\Gamma$ is one of the best studied
examples of a (so-called) {\em big mapping class group}; see \cite{Aramayona_Vlamis} for an
overview of the subject.  

It is convenient to work in a compactified setting. Define $\bar{K}:=K \cup \infty \subset
S^2 = \R^2 \cup \infty$. Then $\Gamma$ may be identified with $\Mod(S^2,\bar{K})$, the
group of isotopy classes of orientation-preserving homeomorphisms of $S^2$ preserving $\bar{K}$
(note that since $\infty$ is the unique isolated point of $\bar{K}$, every homeomorphism
of $S^2$ permuting $\bar{K}$ must fix $\infty$, and therefore arises from a homeomorphism
of $\R^2$ preserving $K$).

The {\em ray graph} $\RR$ is the graph whose vertices are isotopy classes of
proper rays in $\R^2 - K$ from $\infty$ to a point of $K$, and whose edges are
pairs of classes admitting disjoint representatives (except possibly at endpoints in $K$).
This ray graph was first introduced in \cite{Calegari_blog}, and Bavard \cite{Bavard} 
showed that $\RR$ is Gromov hyperbolic of infinite diameter, and that some elements of $\Gamma$ act
on $\RR$ loxodromically. For $\gamma \in \Gamma$ the {\em translation length} of $\gamma$,
denoted $\tau(\gamma)$, is the limit
$$\tau(\gamma) := \lim_{n \to \infty} \frac{d_\RR(p,\gamma^n(p))}{n}$$
where $p \in \RR$ is an (arbitrarily chosen) basepoint. The element $\gamma$ is loxodromic
if and only if $\tau(\gamma)>0$.

For a homeomorphism $f$ of $S^2$ we write $h(f)$ for the topological
entropy of $f$, and for $\gamma \in \Gamma$ we define
$$h(\gamma) := \inf\left\{h(f)\;:\;f \text{ is a homeomorphism representing }\gamma\right\}$$
and
$$h_\infty(\gamma) := \inf\left\{h(f)\;:\; f \text{ is a } C^\infty \text{ diffeomorphism representing }\gamma\right\}$$
where in the second definition $f$ is required to preserve $\bar{K}$ but the smooth
structure on $S^2$ and the embedding of $K$ in it are arbitrary; since any two
Cantor sets in $S^2$ are ambiently homeomorphic this is no restriction.  Evidently
$h(\gamma) \le h_\infty(\gamma)$, and $h_\infty(\gamma) = \infty$ if and only if $\gamma$ has
no smooth representative.  Our main theorem is the following:

\begin{theorem_A}
Let $f$ be a $C^\infty$ diffeomorphism of $S^2$ preserving $\bar{K}$, and let
$\gamma$ be the class of $f$ in $\Gamma$.  Then
$$\tau(\gamma) \le \frac{h(f)}{\log{2}}$$
and consequently $\tau(\gamma) \le h_\infty(\gamma)/\log{2}$ for every
$\gamma \in \Gamma$.
\end{theorem_A} 

Inequalities of this form are familiar in the theory of mapping class groups of
surfaces of finite type.  For $S$ of finite type the group $\Mod(S)$ acts on the
curve complex $\CC(S)$ of Harvey \cite{Harvey}, which is hyperbolic
\cite{Masur_Minsky}, and $\gamma$ acts loxodromically if and only if it is
pseudo-Anosov, in which case $\log \lambda(\gamma)$, the logarithm of the stretch
factor, is the minimal entropy of a homeomorphism representing $\gamma$
\cite{Fathi_Shub, Handel}.  The relationship between $\tau(\gamma)$ and
$\log \lambda(\gamma)$ has been studied in some detail; in particular the minimal
value of the ratio $\log\lambda(\gamma)/\tau(\gamma)$ over pseudo-Anosov
$\gamma$ is comparable to $\log|\chi(S)|$ \cite{Gadre_Hironaka_Kent_Leininger,
Aougab_Taylor}, so that no inequality of the form of Theorem A can hold with a
constant independent of the topology of $S$.  It is therefore not obvious a priori
that a universal constant should exist at all for a surface of infinite type; the
content of Theorem A is that one does, and that it may be taken to be $1/\log 2$.

\subsection*{Smoothness}

It is natural to wonder whether $h_\infty(\gamma)$ can be replaced by $h(\gamma)$ in Theorem A.
The missing ingredient is an analogue of Thurston's classification theorem for
mapping classes that applies to surfaces of infinite type --- see
\cite{Bestvina_Fanoni_Tao} for a classification of ``tame'' homeomorphisms under
finiteness hypotheses on their accumulation sets --- and in particular an
analogue of the theorem of Fathi--Shub and Handel which identifies the minimal
entropy in a mapping class.  Its absence is the only reason that Theorem A is
stated for smooth representatives.  What is needed to remove the hypothesis is
the following statement, which we are unable to prove and which does not mention
the ray graph.  For a path $\gamma$ in a metric space
and $\epsilon>0$ let $V_\epsilon(\gamma)$, the {\em $\epsilon$-length} of
$\gamma$, be the largest $m$ for which the domain of $\gamma$ contains $m$
disjoint subintervals whose images have diameter at least $\epsilon$.

\begin{conjecture_main}
Let $f$ be a homeomorphism of a compact surface $S$, let $\alpha \subset S$ be
an arc, and let $\epsilon>0$.  Then
$$\limsup_{n \to \infty} \frac 1 n \log V_\epsilon\left(f^n(\alpha)\right)
\; \le \; h(f) .$$
\end{conjecture_main}

For a $C^\infty$ diffeomorphism the Conjecture is a theorem of Yomdin
\cite{Yomdin}, since $V_\epsilon(\gamma) \le \length(\gamma)/\epsilon$; this is
how smoothness enters the proof of Theorem A, and it is the only place where it
does. In particular:

\begin{theorem_B}
If the Conjecture holds then $\tau(\gamma) \le h(\gamma)/\log{2}$ for every
$\gamma \in \Gamma$.
\end{theorem_B}

\subsection*{Invariant Sets}

The main theorem and its proof hold verbatim if $K$ is {\em any} nonempty
compact totally disconnected subset of $\R^2$ (finite or infinite). In particular, 
we may take $K$ to depend on $f$. That is, if $f$ is any
$C^\infty$ diffeomorphism of $S^2$ fixing $\infty$, then $h(f)/\log{2} \ge \sup_K \tau_K(\gamma_K)$
where the supremum is taken over all $f$-invariant compact totally disconnected 
$K \subset \R^2$, where $\gamma_K$ is
the class of $f$ in the mapping class group $\Mod(S^2,\bar{K})$ for $\bar{K}:=K\cup \infty$,
and where $\tau_K$ is translation length of $\gamma_K$ in the ray graph of $K$.

In fact, one does not even need $K$ to be totally disconnected! Given any 
$f$-invariant compact set $K \subset \R^2$ we may replace each component $X$ of $K$ by its
full set --- i.e.\/ the minimal compact simply-connected subset $X' \subset \R^2$ containing
$X$ --- and take the quotient of $S^2$ by crushing each maximal full set (with respect 
to containment) to a point. By
Moore's theorem the quotient space is homeomorphic to $S^2$. The
homeomorphism $f$ descends to a homeomorphism $f'$ of this quotient $S^2$ fixing $\infty$ and leaving 
invariant a compact totally disconnected subset $K' \subset \R^2$ whose points are
the images of the (maximal) full components of $K$, and thereby a class $\gamma_{K'}$ in
$\Mod(S^2,\bar{K}')$. Note if $K$ is totally disconnected then of course $K'=K$. 
Because $S^2$ is compact, the quotient map $\pi:S^2 \to S^2$ is uniformly continuous
so for every arc $\gamma$ and every $\epsilon$ there is $\delta$ so that
$V_\epsilon(\pi \gamma) \le V_\delta(\gamma)$. Thus (following the notation of
the proof of Theorem~A in \S~\ref{section:proof}) we obtain the following chain of inequalities
$$\tau_{K'}(\gamma_{K'}) \le \frac {1} {\log 2} \limsup \frac{\log V_\epsilon((f')^n(\pi r^-))} {n} \le
\frac {1} {\log 2} \limsup \frac{\log V_\delta(f^n(r^-))} {n} \le \frac {h(f)}{\log 2}$$

In particular, we obtain the general inequality 
$h(f)/\log{2} \ge \sup_K \tau_{K'}(\gamma_{K'})$ where now $K$ ranges
over {\em all} $f$-invariant compact $K\subset \R^2$.

\subsection*{Outline}

The proof of Theorem A has two parts which are independent of each other, each of
which we think are of interest beyond the specific applications to this paper.

The first, carried out in \S\ref{section:divide_by_2}, is combinatorial: if two
rays cross $N$ times then their distance in the ray graph is at most
$\log_2 N + 2$ (Lemma \ref{lemma:divide_by_2} and Corollary \ref{cor:log_bound}).
This is the exact analogue for the ray graph of the classical bound
$d_{\CC(S)}(a,b) \le 2\log_2 i(a,b) + 2$ of Hempel \cite{Hempel}; 
the constant $1/\log 2$ of Theorem A comes from here.

The second, carried out in \S\ref{section:geometry}, is geometric: for a fixed
ray $r$ and a fixed metric on $S^2$ there is an $\epsilon > 0$ so that for any
ray $\gamma$ there is a ray $r'$, adjacent to $r$ in $\RR$, which crosses
$\gamma$ at most $3V_\epsilon(\gamma)$ times (Proposition \ref{prop:key_geometric}).
Here $\epsilon$ is a constant that may be derived from a fixed decomposition
of $S^2$ into a disk containing infinity, two disks containing $K$, and a complementary
pair of pants; the constant $\epsilon$ is chosen so that any arc crossing the pair 
of pants essentially has diameter at least $\epsilon$, so that
$\epsilon$-length controls the number of such crossings.

Combining the two parts bounds $d_\RR(r, f^n(r))$ by
$\log_2 V_\epsilon(f^n(r)) + O(1)$ for any homeomorphism $f$ representing
$\gamma$. Taking $n \to \infty$ reduces both theorems to the
Conjecture, and Yomdin's theorem proves it in the smooth case.  This is carried out in
\S\ref{section:proof}.

In \S\ref{section:hierarchy} we place the Conjecture in a hierarchy of possible
definitions of a stretch factor for an element of $\Gamma$, and record what
Theorem A proves about that hierarchy and what the Conjecture would add.  We also
give the reasons we believe the Conjecture, and sketch an approach to it for
loxodromic elements, via the classification of the Gromov boundary of $\RR$ in
\cite{Bavard_Walker} and an analogue of Handel's global shadowing theorem
\cite{Handel}.

\section{Rays and taut position}\label{section:taut_position}

Since all rays under consideration start at the common point $\infty$, it is
awkward to appeal to general position arguments. Thus we help ourselves to the
following convention in what follows

\begin{convention}\label{convention:disk}
Fix a closed round disk $B_\infty \subset S^2$ with
$\infty \in \inte(B_\infty)$ and disjoint from $K$, and set $\Sigma:=S^2 - \inte(B_\infty)$,
a closed disk containing $K$ in its interior. 

A {\em ray} is an embedded arc
$\alpha : [0,1] \to \Sigma$ with $\alpha(0) \in \partial\Sigma$,
$\alpha(1) \in K$ and $\alpha(0,1)$ contained in $\Sigma - K$.
The {\em equivalence class} of a ray is its isotopy class through rays.

Note that the endpoint of a ray on $\partial \Sigma$ may move freely during
an isotopy, whereas the endpoint on $K$ is fixed because $K$ is totally
disconnected.
\end{convention}

There is an obvious bijection between rays (and their equivalence classes) in the sense of 
\S~\ref{section:introduction} and rays (and their equivalence classes) 
in the sense of Convention~\ref{convention:disk} and we will move between these
two conventions without comment whenever it is unambiguous and the meaning is clear. 

\begin{definition}[Taut position]\label{def:taut}
Two rays $\alpha,\beta$ are in {\em general position} if they are transverse 
(except possibly at their common endpoints in $K$) and
meet in finitely many points of $\mathrm{int}(\Sigma)-K$.  An embedded disk
$B \subset \Sigma$ with $\inte(B)$ disjoint from $K$ is
\begin{enumerate}
\item{a {\em bigon} if $\partial B$ is the union of a subarc of $\alpha$ and a
subarc of $\beta$ meeting at two points of $\alpha \cap \beta$;}
\item{a {\em half-bigon at infinity} if $\partial B$ is the union of a subarc of
$\alpha$, a subarc of $\beta$, and a subarc of $\partial \Sigma$; or}
\item{a {\em half-bigon at $p$}, where $p = \alpha(1)=\beta(1)\in K$, if $\partial B$
is the union of a subarc of $\alpha$ and a subarc of $\beta$ meeting at $p$ and
at one point of $\alpha\cap\beta$.}
\end{enumerate}
Two rays $\alpha,\beta$ are in {\em taut position} if they are in general position
and there are no bigons or half-bigons.
\end{definition}

Given any two equivalence classes of rays $r_0,r_1$, 
we may easily find representatives $\alpha,\beta$ whose interiors are transverse. 
Now, it is possible that $|\alpha \cap \beta|=\infty$, and it is even possible that one cannot find
representatives of the rays with finite intersection. However, intersections of
interior transerse $\alpha \cap \beta$ can only accumulate at a (necessarily common) endpoint.
Thus one may always find a class $r_1$ and a representative $\beta'$ disjoint from $\beta$ (so that
$d_\RR(r_1,r_1')\le 1$) for which $\beta'(1) \ne \alpha(1)$, and then if we make the
interior of $\beta'$ transverse to $\alpha$, the resulting representatives will satisfy
$|\alpha \cap \beta'|<\infty$ and will be in general position in the sense of Definition~\ref{def:taut}.

The following lemma (and its proof) is standard:

\begin{lemma}[Tautening]\label{lemma:taut}
Let $\alpha,\beta$ be rays in general position with $|\alpha\cap\beta| = N$.
Then $\beta$ is isotopic to a ray $\beta'$ with $\alpha,\beta'$ in taut position
and $|\alpha \cap \beta'| \le N$.
\end{lemma}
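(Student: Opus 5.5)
The plan is to induct on $N=|\alpha\cap\beta|$, removing one bigon or half-bigon at a time by an isotopy of $\beta$ that never increases the number of intersection points. If $\alpha,\beta$ are already taut there is nothing to do. Otherwise there is an embedded disk $B$ of one of the three types in Definition~\ref{def:taut}, with $\inte(B)\cap K=\emptyset$.

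The first step is to replace $B$ by an \emph{innermost} disk of the same kind. The arcs of $\alpha\cap B$ and $\beta\cap B$ cut $B$ into smaller disks. Since $\alpha$ and $\beta$ are embedded, any subarc of $\beta$ entering $\inte(B)$ must exit through the $\alpha$-side of $\partial B$ (or through $\partial\Sigma$). It therefore cuts off a smaller bigon or half-bigon whose interior is still disjoint from $K$. The same holds with the roles of $\alpha$ and $\beta$ exchanged. There are only finitely many intersection points, so this process terminates. It yields a disk $B'$ of one of the three types whose interior meets $\alpha\cup\beta$ only in $\partial B'$.

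Now define the isotopy, treating the three cases separately.
\begin{enumerate}
\item For a bigon, push the $\beta$-subarc of $\partial B'$ across $B'$ to just beyond the $\alpha$-subarc. This removes the two corner points, so the count drops by $2$.
\item For a half-bigon at infinity, push the $\beta$-subarc across $B'$. Its endpoint on $\partial\Sigma$ slides along $\partial\Sigma$, which Convention~\ref{convention:disk} permits. This removes one intersection point.
\item For a half-bigon at $p\in K$, push the $\beta$-subarc from the corner point to $p$ across $B'$, keeping the endpoint $p$ fixed. This again removes one intersection point.
\end{enumerate}
In each case the support of the isotopy is a small neighborhood of $B'$. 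Because $\inte(B')$ misses $K$ and misses $\alpha\cup\beta$ apart from its boundary, the neighborhood can be chosen disjoint from $K$ except at $p$ in the third case. So the move is an isotopy through rays, it creates no new intersections, and it preserves general position. Induction on $N$ then finishes the proof. Termination is automatic because $N$ strictly decreases.

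The main obstacle is the third case. Near $p$ the two rays approach a common point of $K$, points of $K$ may accumulate at $p$, and the move must remain an isotopy fixing $p$. I would handle this by choosing a neighborhood of $B'$ that is thin near $p$. Such a neighborhood can be built from a small collar of the $\alpha$-side of $B'$ that avoids $K-\{p\}$; this is possible because $K$ is closed and $B'\cap K\subseteq\{p\}$. I would then perform the push inside this collar, fixing $p$. A secondary point is to make sure the innermost-disk argument never produces a disk containing points of $K$. This holds because every disk produced lies inside the original $B$.
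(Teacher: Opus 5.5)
Your proposal is correct and follows the same route as the paper. You pass to an innermost bigon or half-bigon and push the $\beta$-side across it, letting the endpoint slide along $\partial\Sigma$ for a half-bigon at infinity, so that $N$ strictly drops. Your tapered collar along the $\alpha$-side near $p$, which avoids the points of $K-\{p\}$ accumulating there, supplies a detail that the paper's two-line proof leaves implicit.
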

\begin{proof}
Innermost bigons or half-bigons can be removed by an
isotopy without creating new intersections or ruining general position.
We remark that a half-bigon at infinity may be eliminated by an isotopy of
$\beta$ that moves the endpoint on $\partial \Sigma$.
\end{proof}

Another way to see this is to choose a complete hyperbolic structure on $\Sigma - K$ with
totally geodesic boundary and move the rays to geodesic representatives orthogonal
to $\partial \Sigma$.

\section{The Divide by 2 Lemma}\label{section:divide_by_2}

This section contains our first main result, the Divide by 2 Lemma, i.e.\/
Lemma~\ref{lemma:divide_by_2}. Given two rays $\alpha,\beta$ in
taut position we may construct a ray $\mu$ disjoint from $\alpha$ (except possibly
at a common endpoint) and with $|\mu \cap \beta| \le \lfloor N/2 \rfloor$.

\begin{lemma}[Divide by 2]\label{lemma:divide_by_2}
Let $\alpha,\beta$ be rays in taut position with $|\alpha\cap\beta| = N \ge 1$.
Then there is a ray $\mu$, disjoint from $\alpha$ except possibly at a common
endpoint in $K$, with $|\mu\cap\beta| \le \lfloor N/2\rfloor$.
Furthermore, if $\alpha$ and $\beta$ have distinct endpoints in $K$ then one
can arrange that
$|\mu\cap\beta| \le \lfloor (N-1)/2\rfloor$.
\end{lemma}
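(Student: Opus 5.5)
The plan is to build $\mu$ from two pieces: a subarc of $\beta$ that meets $\alpha$ only at its endpoints, followed by a parallel push-off of a subarc of $\alpha$. We then count crossings with $\beta$ for a small family of such candidates and show that one of them meets the bound. I have not yet found the choice that forces the bound; that is the main open step.

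\textbf{Setup.} Label the points of $\alpha\cap\beta$ in two ways:
\begin{itemize}
\item $x_1,\dots,x_N$ in the order met along $\beta$, going from $\beta(0)\in\partial\Sigma$ to $\beta(1)\in K$;
\item $y_1,\dots,y_N$ in the order met along $\alpha$, going from $\alpha(0)$ to $\alpha(1)$.
\end{itemize}
The initial segment $\beta[0,x_1]$ and the terminal segment $\beta[x_N,1]$ each meet $\alpha$ only at one endpoint. So they can be concatenated with a push-off of a subarc of $\alpha$, taken on the appropriate side, to produce embedded rays whose interiors are disjoint from $\alpha$. Two such rays, with $x_1=y_i$ and $x_N=y_j$, are:
\begin{enumerate}
\item $\mu_1$: follow $\beta$ from $\partial\Sigma$ to just before $x_1$, then run parallel to $\alpha$ from $x_1$ to $\alpha(1)$. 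It ends at the common endpoint $\alpha(1)$, and meets $\beta$ in at most $N-i$ points, namely the $y_k$ with $k>i$.
\item $\mu_2$: run parallel to $\alpha$ from $\alpha(0)$ up to $x_N$, then follow $\beta$ from $x_N$ to $\beta(1)$. It meets $\beta$ in at most $j-1$ points, namely the $y_k$ with $k<j$, and it has no intersection near $x_N$ once the push-off side is chosen correctly.
\end{enumerate}
The same construction applies to any arc of $\beta$ between consecutive intersection points $x_k,x_{k+1}$, when combined with two push-offs of $\alpha$ leading to $\partial\Sigma$ or to $\alpha(1)$. This enlarges the family of candidates: each intersection point of $\alpha$ with $\beta$ is avoided by at least one candidate and crossed by at most one of a complementary pair.

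\textbf{Counting.} The goal is to exhibit two candidates whose crossing counts sum to at most $N$, so that the smaller is at most $\lfloor N/2\rfloor$. The natural approach is to pick a segment of $\beta$ together with the two directions along $\alpha$ away from it. The points of $\alpha\cap\beta$ then split into those on one side of the segment's endpoint along $\alpha$ and those on the other side, and the two candidates partition them. Tautness, via Lemma~\ref{lemma:taut}, is what guarantees this. The absence of bigons and half-bigons ensures that the push-offs and concatenations really are embedded rays in distinct, nontrivial classes. It also ensures that no intersection point is counted by both members of a pair. The absence of half-bigons at $\alpha(1)$ is needed so that a candidate ending at the common endpoint is still a legitimate ray disjoint from $\alpha$.

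\textbf{Main obstacle.} The main difficulty is choosing the segment so that the two complementary candidates are both genuine rays, each ending at $\alpha(1)$ or $\beta(1)$ rather than returning to $\partial\Sigma$. The pair $\mu_1,\mu_2$ alone does not obviously give counts summing to $N$ when $i\le j$. What I would try first is a case analysis on the relative order of $i$ and $j$, and more generally on the side of $\alpha$ from which $\beta$ arrives at each intersection. The idea is to use the outermost segment of $\beta$ in the appropriate direction, so that one endpoint of the chosen segment is the true endpoint of $\beta$ or of $\alpha$.

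\textbf{Distinct endpoints.} When $\alpha(1)\neq\beta(1)$, one of the two complementary candidates can be modified near its endpoint to avoid one further crossing. This comes from the terminal arc of $\beta$ beyond its last crossing, which is then not forced to cross the push-off. The gain of one crossing gives the bound $\lfloor (N-1)/2\rfloor$.
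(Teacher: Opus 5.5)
There is a genuine gap, and it goes beyond the counting step you leave open. Every candidate you build ends at $\alpha(1)$ or at $\beta(1)$, and in general no ray ending at either of these points satisfies the bound.

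Here is a configuration showing this. Take $\alpha$ from $\partial\Sigma$ to $p$. Let $\beta$ come in from $\partial\Sigma$, wind $N$ times around $p$ crossing $\alpha$ once per turn at $x_1,\dots,x_N$, and end at a point $q\neq p$ of $K$ inside the last turn. The crossings then occur in the same order along both rays. Put points of $K$ in the region cut off by $\partial\Sigma$, the initial segment of $\beta$ up to $x_1$, and the segment of $\alpha$ from $\alpha(0)$ to $x_1$. Every other complementary region either has at least two sides on $\beta$ or contains $q$, so the pair is taut.

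After cutting $\Sigma$ along $\alpha$, the $N-1$ arcs of $\beta$ joining consecutive crossings become nested chords. Each of them separates the boundary arc coming from $\partial\Sigma$ from both $p$ and $q$. So any ray disjoint from $\alpha$ that ends at $p$ or $q$ meets $\beta$ at least $N-1$ times. This exceeds $\lfloor N/2\rfloor$ for $N\ge 3$, and exceeds $\lfloor (N-1)/2\rfloor$ for $N\ge 2$.

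In particular your $\mu_1,\mu_2$ (here $i=1$, $j=N$) both cross $N-1$ times, so no pair of your candidates has counts summing to $N$. By contrast, a ray from $\partial\Sigma$ to a point of $K$ in the first region misses $\beta$ entirely. The configuration of Figure~\ref{figure:rays} shows the same thing: reaching $p$ or $q$ there costs $3>\lfloor 5/2\rfloor$ crossings.

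What is missing is that $\mu$ must be allowed to end at an arbitrary point of $K$, and that tautness is exactly what supplies such points where they are cheap to reach. Tautness is not what makes your push-offs embedded or prevents double counting.

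In the disk $\Sigma_\alpha$ obtained by cutting along $\alpha$, the arcs $\beta_0,\dots,\beta_{N-1}$ of $\beta$ between consecutive crossings are disjoint chords. Their dual graph $T$ is therefore a tree with $N$ edges. Only $\beta_0$ meets the boundary arc $\partial'$ coming from $\partial\Sigma$.

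A leaf region of $T$ is bounded by one chord and one arc of $\partial\Sigma_\alpha$. Suppose it contained neither $p$ on its boundary, nor the terminal arc of $\beta$ (hence $q$), nor an interior point of $K$. Then it would be a bigon or a half-bigon at infinity. So every leaf contains a point of $K$ reachable without crossing further chords.

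Deleting the edge $\beta_0$ splits the remaining $N-1$ edges between two subtrees, each containing one of the two regions adjacent to $\partial'$. Take a leaf in the subtree with at most $\lfloor (N-1)/2\rfloor$ edges, or that region itself if the subtree has no edges. It is reached from $\partial'$ crossing at most $\lfloor (N-1)/2\rfloor$ chords.

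When $\alpha(1)=\beta(1)$, the last arc of $\beta$ is also a chord, ending at $p$. There are then $N$ remaining edges, giving the bound $\lfloor N/2\rfloor$. So the gain of one in the distinct-endpoint case comes from the last arc of $\beta$ being free rather than a chord. It does not come from modifying a candidate near its endpoint.
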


\begin{figure}[htbp]
\centering
\begin{tikzpicture}[scale=1.15]
\draw[thick] (0,0) circle (3);
\node at (18:3.32) {$\partial \Sigma$};
\node at (-2.30,2.30) {$\Sigma$};
\draw[very thick] (0,3) -- (0,-1.6);
\node[left] at (-0.10,2.74) {$\alpha$};
\draw[thick,red] plot [smooth,tension=0.7] coordinates
  {(2.60,1.50) (2.15,1.22) (1.75,0.98) (1.30,0.85) (0,0.90)};
\draw[thick,red] plot [smooth,tension=0.7] coordinates
  {(0,0.90) (-0.85,0.72) (-0.90,0.36) (0,0.20)};
\draw[thick,red] plot [smooth,tension=0.7] coordinates
  {(0,0.20) (1.60,-0.15) (2.25,-1.10) (1.90,-2.15) (0.60,-2.75) (-0.80,-2.70)
   (-2.05,-1.95) (-2.45,-0.60) (-2.30,0.90) (-1.70,1.95) (-0.85,2.35) (0,2.20)};
\draw[thick,red] plot [smooth,tension=0.7] coordinates
  {(0,2.20) (1.00,2.02) (1.05,1.72) (0,1.60)};
\draw[thick,red] plot [smooth,tension=0.7] coordinates
  {(0,1.60) (-1.30,1.25) (-1.80,0.20) (-1.70,-1.10) (-1.00,-1.95) (0.10,-2.25)
   (1.10,-1.85) (1.55,-1.00) (1.30,-0.55) (0,-0.50)};
\draw[thick,red] plot [smooth,tension=0.7] coordinates
  {(0,-0.50) (-0.95,-0.75) (-1.15,-1.45) (-0.55,-1.95) (0.35,-2.00) (0.90,-1.55)
   (0.72,-1.18) (0,-1.10)};
\draw[thick,red] plot [smooth,tension=0.7] coordinates
  {(0,-1.10) (-0.60,-1.35) (-0.40,-1.80) (0.30,-1.85)};
\fill[red] (2.60,1.50) circle (0.055);
\node[red] at (2.92,1.70) {$\beta$};
\foreach \y in {2.20,1.60,0.90,0.20,-0.50,-1.10} {\fill (0,\y) circle (0.055);}
\node at (-0.30,2.30) {$x_3$};
\node at (-0.30,1.50) {$x_4$};
\node at (0.28,1.07) {$x_1$};
\node at (-0.32,0.08) {$x_2$};
\node at (-0.34,-0.41) {$x_5$};
\node at (0.30,-0.98) {$x_6$};
\fill[blue] (0,-1.60) circle (0.075); \node[blue] at (-0.24,-1.68) {$p$};
\fill[blue] (0.30,-1.85) circle (0.075); \node[blue] at (0.56,-1.92) {$q$};
\fill[blue] (0.50,1.95) circle (0.06);
\fill[blue] (0.72,1.84) circle (0.06);
\fill[blue] (-0.45,0.61) circle (0.06);
\fill[blue] (-0.63,0.50) circle (0.06);
\fill[blue] (0.42,-1.64) circle (0.06);
\end{tikzpicture}
\caption{A configuration of rays $\alpha,\beta$ in taut position in $\Sigma$ with
$N=|\alpha \cap \beta| = 6$.  The crossings are labelled $x_1,\cdots,x_6$ in the
order in which they occur along $\beta$; along $\alpha$ they occur in the order
$x_3,x_4,x_1,x_2,x_5,x_6$.  Blue dots indicate points of $K$; the complementary
regions with no blue dot contain no points of $K$.}
\label{figure:rays}
\end{figure}
\begin{proof}
Suppose first that $p := \alpha(1)$ and $q := \beta(1)$ are distinct.  Let
$\Sigma_\alpha$ be the disk obtained by cutting $\Sigma$ along $\alpha$ and then
compactifying by gluing two copies $\alpha^\pm$ of $\alpha$ back in. Thus
$\partial \Sigma_\alpha$ decomposes into three arcs
$$\partial \Sigma_\alpha = \partial' \cup \alpha^+ \cup \alpha^-$$
and by abuse of notation we may write $p = \alpha^+ \cap \alpha^-$. We may think of
the Cantor set $K$ as a subset of $\Sigma_\alpha$, intersecting the boundary only at
the point $p$. The desired ray $\mu$ will be precisely a proper arc in $\Sigma_\alpha$
with initial point in the interior of $\partial'$, and terminal point in $K$ (the
case that the terminal point is $p$ is allowed).

Since the arc $\beta$ is in taut position, it is decomposed by $\alpha$ into finitely
many arcs $\beta_0,\cdots,\beta_N$ with endpoints $x_1,\cdots,x_N$ the points of
$\alpha \cap \beta$ in order along $\beta$ 
(recall that we are assuming that $\alpha(1)$ and $\beta(1)$ are distinct).
Each point $x_i$ determines a pair of points $x_i^\pm \in \alpha^\pm$ and by 
abuse of notation we may think of the $\beta_i$ as arcs in $\Sigma_\alpha$ so that
for $1\le i \le N-1$ the arc $\beta_i$ runs from one of $x_i^{\pm}$ to one of
$x_{i+1}^{\pm}$, and if $\beta_i$ ends at $x_{i+1}^+$ (say) then $\beta_{i+1}$ starts
at $x_{i+1}^-$ and vice versa. In particular, the arcs $\beta_0,\cdots,\beta_{N-1}$
are pairwise disjoint chords of $\Sigma_\alpha$. 

\begin{figure}[htbp]
\centering
\begin{tikzpicture}[scale=1.2]
\draw[thick] (0,0) circle (3);
\draw[line width=2.4pt,black!30] (210:3) arc (210:330:3);
\node[black!55] at (270:3.40) {$\partial'$};
\node at (32:3.70) {$\alpha^+$};
\node at (148:3.70) {$\alpha^-$};
\foreach \a in {330,-12,6,24,42,60,78,102,120,138,156,174,192,210,300}
   {\fill (\a:3) circle (0.055);}
\fill[blue] (90:3) circle (0.075);
\node at (330:3.48) {$\alpha^+(0)$};	
\node at (-12:3.32) {$x_3^+$};
\node at (6:3.30) {$x_4^+$};
\node at (24:3.30) {$x_1^+$};
\node at (42:3.30) {$x_2^+$};
\node at (60:3.30) {$x_5^+$};
\node at (78:3.32) {$x_6^+$};
\node[blue] at (90:3.28) {$p$};
\node at (102:3.32) {$x_6^-$};
\node at (120:3.30) {$x_5^-$};
\node at (138:3.30) {$x_2^-$};
\node at (156:3.30) {$x_1^-$};
\node at (174:3.32) {$x_4^-$};
\node at (192:3.32) {$x_3^-$};
\node at (210:3.48) {$\alpha^-(0)$};	
\node at (300:3.35) {$\beta(0)$}; 
\draw[thick,red] ([shift={(3.839,-1.247)}]-150.00:2.701) arc[start angle=-150.00, end angle=-246.00, radius=2.701];
\draw[thick,red] ([shift={(-2.547,1.654)}]-114.00:0.475) arc[start angle=-114.00, end angle=48.00, radius=0.475];
\draw[thick,red] ([shift={(-5.262,10.328)}]-48.00:11.196) arc[start angle=-48.00, end angle=-78.00, radius=11.196];
\draw[thick,red] ([shift={(3.033,-0.159)}]-102.00:0.475) arc[start angle=-102.00, end angle=-264.00, radius=0.475];
\draw[thick,red] ([shift={(-2.501,4.908)}]-96.00:4.620) arc[start angle=-96.00, end angle=-30.00, radius=4.620];
\draw[thick,red] ([shift={(-0.503,3.174)}]-150.00:1.152) arc[start angle=-150.00, end angle=-12.00, radius=1.152];
\draw[thick,red] (102:3) .. controls (99:2.72) .. (94:2.55);
\fill[blue] (94:2.55) circle (0.075); \node[blue] at (90:2.28) {$q$};
\node[red] at (338:1.68) {$\beta_0$};
\node[red] at (147:2.28) {$\beta_1$};
\node[red] at (137:0.62) {$\beta_2$};
\node[red] at (-3:2.32) {$\beta_3$};
\node[red] at (117:1.25) {$\beta_4$};
\node[red] at (99:1.82) {$\beta_5$};
\node[red] at (108:2.48) {$\beta_6$};
\fill[blue] (-8:2.82) circle (0.06);
\fill[blue] (2:2.82) circle (0.06);
\fill[blue] (143:2.80) circle (0.06);
\fill[blue] (151:2.80) circle (0.06);
\fill[blue] (86:2.76) circle (0.06);
\end{tikzpicture}
\caption{The disk $\Sigma_\alpha$ obtained by cutting $\Sigma$ along $\alpha$, for
the configuration of Figure~\ref{figure:rays}; the chords are drawn as hyperbolic
geodesics.  The boundary decomposes as $\partial' \cup \alpha^+ \cup \alpha^-$,
with $\partial'$ in grey, and the two copies of a point of $\alpha\cap\beta$ are
at equal angles.  The arcs $\beta_0,\cdots,\beta_5$ are chords; $\beta_6$ is the
free arc ending at $q$.}
\label{figure:cut}
\end{figure}

The $N$ chords cut $\Sigma_\alpha$ into $N+1$ regions, and the dual graph $T$ is a
tree because $\Sigma_\alpha$ is a disk. In particular, $T$ has $N+1$ vertices and 
$N$ edges. For a region $R$ of $\Sigma_\alpha - \cup \beta_i$ (i.e.\/ for a
vertex of $T$) let $\cost(R)$ be the least number of $\beta_i$ that must be crossed
by a path in $\Sigma_\alpha$ from $\partial'$ to $R$.

Only one $\beta_i$ has an endpoint on $\partial'$, namely $\beta_0$ which starts
at $\beta(0)\in \partial'$. Thus there are exactly two regions of $\Sigma_\alpha - \cup \beta_i$
with boundary on $\partial'$, the two sides of $\beta_0$. In other words, $\cost(R)$
is the distance in $T$ from $R$ to the edge (dual to) $\beta_0$.

The argument is concluded in three steps.

\medskip

\noindent{\bf Step 1: every leaf of $T$ contains an accessible point of $K$.} 
Suppose $R$ is a region dual to a leaf (i.e.\/ a 1-valent vertex) of $T$.
Let $\beta_k \subset \partial R$ be the chord dual to the edge with vertex dual to $R$.
We claim there is an embedded arc in $R$ from $\beta_k$ to some point of $K$
(possibly on $\partial R$) disjoint from $\beta_N$ except possibly at the end point. 

Since $R$ is a leaf, $\partial R = \beta_k \cup J$ where $J$ is an arc of $\partial \Sigma_\alpha$. 
There are two special cases: if $J$ contains $p$ we choose an arc from $\beta_k$ to $p$,
and if $R$ contains the free arc $\beta_N$ then we choose an arc from $\beta_k$ to 
$q$ (the endpoint of $\beta_N$) whose interior is disjoint from $\beta_N$.
Otherwise $R$ must contain interior points of $K$, or else $R$ would
either be a half-bigon at infinity, or a bigon (depending on whether $J$ contains
a subarc of $\partial'$ or not) which violates tautness, and we can simply choose
an arc from $\beta_k$ to some interior point of $K$.

\medskip

\noindent{\bf Step 2: some leaf has $\cost \le \lfloor(N-1)/2\rfloor$.}
Deleting the edge $\beta_0$ from $T$ leaves two subtrees $T_1,T_2$, containing the
two regions $R^1,R^2$ of cost $0$, with
$|E(T_1)| + |E(T_2)| = N-1$.  Choose $i$ with
$|E(T_i)| \le \lfloor (N-1)/2\rfloor$.  If $T_i$ has no edges then $R^i$ has
degree $1$ in $T$ and is itself a leaf, of cost $0$.  Otherwise let $R$ be any
leaf of $T_i$ other than $R^i$; then $R$ is a leaf of $T$, and the path in
$T_i$ from $R^i$ to $R$ uses at most $|E(T_i)|$ edges, so
$\cost(R) \le \lfloor (N-1)/2 \rfloor$.

\begin{figure}[htbp]
\centering
\begin{tikzpicture}[scale=1.2,
  vx/.style={circle,draw,fill=white,inner sep=0.5pt,minimum size=0.38cm},
  gd/.style={circle,draw,double,double distance=1.2pt,line width=0.5pt,fill=white,
             inner sep=0.5pt,minimum size=0.40cm}]
\draw[black!45] (0,0) circle (3);
\draw[line width=2.4pt,black!20] (210:3) arc (210:330:3);
\draw[black!40] ([shift={(3.839,-1.247)}]-150.00:2.701) arc[start angle=-150.00, end angle=-246.00, radius=2.701];
\draw[black!40] ([shift={(-2.547,1.654)}]-114.00:0.475) arc[start angle=-114.00, end angle=48.00, radius=0.475];
\draw[black!40] ([shift={(-5.262,10.328)}]-48.00:11.196) arc[start angle=-48.00, end angle=-78.00, radius=11.196];
\draw[black!40] ([shift={(3.033,-0.159)}]-102.00:0.475) arc[start angle=-102.00, end angle=-264.00, radius=0.475];
\draw[black!40] ([shift={(-2.501,4.908)}]-96.00:4.620) arc[start angle=-96.00, end angle=-30.00, radius=4.620];
\draw[black!40] ([shift={(-0.503,3.174)}]-150.00:1.152) arc[start angle=-150.00, end angle=-12.00, radius=1.152];
\draw[black!40] (102:3) .. controls (99:2.72) .. (94:2.55);
\coordinate (RI) at (335:2.30);
\coordinate (RII) at (-3:2.78);
\coordinate (RIII) at (245:1.90);
\coordinate (RIV) at (51:2.10);
\coordinate (RV) at (117:1.70);
\coordinate (RVI) at (86:2.62);
\coordinate (RVII) at (147:2.78);
\draw[very thick,densely dashed,red] (RI) -- (RIII);
\draw[thick,densely dashed] (RI) -- (RII);
\draw[thick,densely dashed] (RIII) -- (RIV);
\draw[thick,densely dashed] (RIV) -- (RV);
\draw[thick,densely dashed] (RV) -- (RVI);
\draw[thick,densely dashed] (RV) -- (RVII);
\node[red] at (0.72,-1.66) {$\beta_0$};
\node[vx] at (RI) {\small $0$};
\node[gd] at (RII) {\small $1$};
\node[vx] at (RIII) {\small $0$};
\node[vx] at (RIV) {\small $1$};
\node[vx] at (RV) {\small $2$};
\node[vx] at (RVI) {\small $3$};
\node[vx] at (RVII) {\small $3$};
\fill[blue] (3:2.85) circle (0.06);
\fill[blue] (143:2.80) circle (0.06);
\fill[blue] (151:2.80) circle (0.06);
\fill[blue] (94:2.55) circle (0.075); \node[blue] at (90:2.28) {$q$};
\fill[blue] (90:3) circle (0.075); \node[blue] at (90:3.28) {$p$};
\draw[very thick,blue,densely dotted] plot [smooth,tension=0.8] coordinates
  {(313:3) (325:2.30) (350:2.60) (3:2.85)};
\node[blue] at (332:1.98) {$\mu$};
\end{tikzpicture}
\caption{The dual tree $T$ of the decomposition of Figure~\ref{figure:cut}; each
vertex is labelled by its $\cost$.  Deleting the root edge $\beta_0$ (heavy
dashed) leaves subtrees with $1$ and $4$ edges, so Step 2 selects a leaf on the
smaller side: the doubled vertex, of $\cost = 1 \le \lfloor (N-1)/2\rfloor = 2$.
The ray $\mu$ (dotted) realises it, crossing $\beta$ once.  The two leaves of
$\cost = 3$ contain $q$, and $p$ on the boundary.}
\label{figure:tree}
\end{figure}

\medskip

\noindent{\bf Step 3: conclusion.}  Let $R$ be a leaf as in Step 2 and
let $\mu$ be a path in $\Sigma_\alpha$ from $\partial'$ to $R$ crossing
$\cost(R)$ chords transversally, followed by a path inside $R$ ending at the
point of $K$ supplied by Step 1.  Then $\mu$ is a ray, disjoint from $\alpha$ except possibly at $p$,
with $|\mu\cap\beta| \le \lfloor (N-1)/2\rfloor$.

\medskip

Finally suppose $\alpha(1) = \beta(1) = p$.  Then in $\Sigma_\alpha$ the last
piece $\beta_N$ runs from one of $x_N^{\pm}$ to the boundary point $p$, so it is a chord
and not a free arc; there are $N+1$ chords and $N+2$ regions,
$|E(T_1)|+|E(T_2)| = N$, and Step 2 produces a leaf of cost at most
$\lfloor N/2 \rfloor$.  Step 1 goes through with the obvious modification.
\end{proof}

\begin{corollary}\label{cor:log_bound}
Let $\alpha,\beta$ be rays in taut position with $|\alpha\cap\beta| = N \ge 1$.  Then
$$d_\RR(\alpha,\beta) \; \le \; \log_2{N}+2 .$$
\end{corollary}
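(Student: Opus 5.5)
Let me write $d(N)$ for the supremum of $d_\RR(\alpha,\beta)$ over pairs of rays in taut position with $|\alpha\cap\beta| = N$, and put $d(0)\le 1$, since disjoint rays are adjacent or equal. The plan is to iterate Lemma~\ref{lemma:divide_by_2} and argue by strong induction on $N$ that $d_\RR(\alpha,\beta)\le \lfloor\log_2 N\rfloor + 2$ for $N\ge1$, which is slightly stronger than the stated bound.

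Given $\alpha,\beta$ in taut position with $N\ge 1$ intersections, Lemma~\ref{lemma:divide_by_2} produces a ray $\mu$ that is disjoint from $\alpha$ except possibly at a common endpoint, with $|\mu\cap\beta|\le\lfloor N/2\rfloor$. Hence $d_\RR(\alpha,\mu)\le 1$. Before applying the inductive hypothesis to the pair $(\mu,\beta)$, I have to check that $\mu$ and $\beta$ are in general position. The construction in Step 3 crosses the chords $\beta_i$ transversally in the interior, so the only possible defect is a shared endpoint in $K$. That is harmless: Definition~\ref{def:taut} allows a common endpoint, and there are finitely many intersections away from it. I then apply Lemma~\ref{lemma:taut} to isotope $\beta$ (equivalently $\mu$) into taut position with $|\mu\cap\beta'|\le\lfloor N/2\rfloor$. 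Isotopy does not change vertices of $\RR$, so $d_\RR(\mu,\beta)$ is unchanged.

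There are two cases. If $|\mu\cap\beta'|=0$ then $d_\RR(\mu,\beta)\le 1$, so $d_\RR(\alpha,\beta)\le 2\le\log_2 N+2$. Otherwise $1\le M:=|\mu\cap\beta'|\le \lfloor N/2\rfloor$, and by induction
$$d_\RR(\alpha,\beta)\le 1+\lfloor\log_2 M\rfloor+2\le 1+\lfloor\log_2 N\rfloor -1+2 = \lfloor\log_2N\rfloor+2,$$
using $\log_2 M\le \log_2(N/2)=\log_2 N-1$. The base case $N=1$ falls under the first case, because $\lfloor 1/2\rfloor=0$, giving $d\le 2$.

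The main point needing care is the step above: making sure the output of Lemma~\ref{lemma:divide_by_2} can be fed back into the lemma. This means checking that $\mu$ is a genuine ray in the sense of Convention~\ref{convention:disk}, that $\mu$ and $\beta$ are in general position, and that tautening via Lemma~\ref{lemma:taut} never increases the intersection count. All three follow from the construction and from the earlier lemmas. Once they are checked, the bound is just the observation that halving the intersection number costs one edge of $\RR$.
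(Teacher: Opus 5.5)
Your argument is correct and is essentially the paper's: take $\mu$ from Lemma~\ref{lemma:divide_by_2}, tauten the pair $(\mu,\beta)$ using Lemma~\ref{lemma:taut} (which does not increase $|\mu\cap\beta|\le\lfloor N/2\rfloor$), and induct on $N$ using $d_\RR(\alpha,\mu)\le 1$. Your additional checks (general position of $\mu$ and $\beta$ away from a possible common endpoint, and the separate case $|\mu\cap\beta'|=0$) fill in details the paper leaves implicit, and the same recursion does give the slightly sharper bound $\lfloor\log_2 N\rfloor+2$.
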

\begin{proof}
Let $\mu$ be as in
Lemma \ref{lemma:divide_by_2} and tauten the pair $(\mu,\beta)$ using Lemma
\ref{lemma:taut}, which does not increase $|\mu\cap\beta| \le \lfloor
N/2\rfloor$. Now use $d_\RR(\alpha,\mu) \le 1$ and induction on $N$.
\end{proof}

\begin{remark}
Corollary \ref{cor:log_bound} is the ray graph analogue of Hempel's bound
$d_{\CC(S)}(a,b)\le 2\log_2 i(a,b)+2$ in the curve complex of a surface of
finite type \cite{Hempel}, with the constant $1$ in place of $2$; it is this
constant which becomes the $1/\log 2$ of Theorem A.  Estimates of this kind arise
inside proofs of hyperbolicity for graphs of curves and arcs, and in particular
inside the proof of hyperbolicity of $\RR$ in \cite{Bavard}, but we do not know a
reference for the statement in the form given here.
\end{remark}

\section{From dynamics to intersection numbers}\label{section:geometry}

\subsection{$\epsilon$-length}

\begin{definition}\label{def:epsilon_length}
Fix a metric $d$ on $S^2$.  For a path $\alpha:[0,1]\to S^2$ and $\epsilon>0$
the {\em $\epsilon$-length} of $\alpha$, denoted $V_\epsilon(\alpha)$, is the largest
$m$ for which there are disjoint subintervals
$I_1,\cdots,I_m \subset [0,1]$ with $\diam(\alpha(I_j))\ge\epsilon$ for every $j$.
\end{definition}

$V_\epsilon$ is a coarse substitute for length at a specific scale.

\begin{lemma}\label{lemma:V_finite}
$V_\epsilon(\alpha) < \infty$ for every path $\alpha$ and every $\epsilon>0$, and
$V_\epsilon(\alpha) \le \length(\alpha)/\epsilon$ if $\alpha$ is rectifiable.
\end{lemma}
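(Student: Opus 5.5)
The plan is to prove the two assertions separately. Finiteness will come from uniform continuity, and the length bound will come from the triangle inequality.

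For finiteness, the path $\alpha:[0,1]\to S^2$ is continuous on a compact interval, so it is uniformly continuous. There is therefore a $\delta>0$ such that $|s-t|<\delta$ implies $d(\alpha(s),\alpha(t))<\epsilon/2$.

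Now take any subinterval $I\subset[0,1]$ with $\diam(\alpha(I))\ge\epsilon$. Its length must be at least $\delta$, since otherwise every pair of image points would be within $\epsilon/2$ and the diameter would be at most $\epsilon/2<\epsilon$. If $I_1,\dots,I_m$ are disjoint subintervals of $[0,1]$, each of length at least $\delta$, then $m\delta\le 1$. Hence $V_\epsilon(\alpha)\le 1/\delta<\infty$.

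For the length bound, assume $\alpha$ is rectifiable and let $I_1,\dots,I_m$ be disjoint subintervals with $\diam(\alpha(I_j))\ge\epsilon$. Fix $\eta>0$.

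For each $j$, choose points $s_j,t_j\in I_j$ with $d(\alpha(s_j),\alpha(t_j))\ge \epsilon-\eta$. The length of $\alpha$ restricted to $[s_j,t_j]$ (or $[t_j,s_j]$) is at least this distance, by the triangle inequality applied to the trivial partition.

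The intervals $[s_j,t_j]$ are disjoint because the $I_j$ are. Length is additive over subdivisions and is nonnegative on the complementary gaps, so
$$\length(\alpha)\;\ge\;\sum_{j=1}^m \length(\alpha|_{[s_j,t_j]})\;\ge\; m(\epsilon-\eta).$$
Letting $\eta\to 0$ gives $m\le \length(\alpha)/\epsilon$. Taking the supremum over all admissible families then yields $V_\epsilon(\alpha)\le\length(\alpha)/\epsilon$.

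The only delicate point is that the diameter might not be attained if $I_j$ is not closed. The $\eta$-slack handles this. Alternatively, one can note that the diameter of $\alpha(I_j)$ equals the diameter of $\alpha(\bar I_j)$, and closures of disjoint intervals overlap in at most endpoints, which does not affect additivity of length.
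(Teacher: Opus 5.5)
Your proof is correct and follows the same route as the paper. Uniform continuity on $[0,1]$ forces every interval whose image has diameter at least $\epsilon$ to have length at least $\delta$, which gives $V_\epsilon(\alpha)\le 1/\delta$; the length bound, which the paper calls immediate, is exactly your additivity-plus-triangle-inequality argument, with the $\epsilon/2$ choice and the $\eta$-slack making explicit the small point about non-attained diameters that the paper leaves implicit.
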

\begin{proof}
By compactness of $I$ there is $\delta>0$ with
$d(\alpha(s),\alpha(t))<\epsilon$ whenever $|s-t|\le\delta$, proving the first statement. 
The second statement is immediate.
\end{proof}

\subsection{The three-disk model}

Let $\gamma \in \Gamma$ be a mapping class. Our goal is to obtain upper bounds on the translation length
of $\gamma$ in terms of the dynamics of any representative diffeomorphism $f$. The
estimates on the dynamical side will involve the $\epsilon$-length of iterates 
$f^n(\alpha)$ for some fixed ray $\alpha$. If there is a subsurface $S$ of $S^2 - \bar{K}$ 
containing $\infty$ and points of $K$ for which the rays $f^n(\alpha)$ have bounded topological complexity in $S$,
the orbit of $f^n(\alpha)$ in $\RR$ will have bounded diameter. Thus the only way for
$\tau(\gamma)$ to be big is for $f^n(\alpha)$ to become more and more topologically `entangled' 
in every such essential subsurface of $S^2 - \bar{K}$. For any compact essential subsurface $P$
of $S^2 - \bar{K}$, topologically essential arcs in $P$ have a definite diameter in $S^2$
(with respect to any fixed metric) and therefore $f^n(\alpha)$ is forced to have many subarcs
with definite diameter, which will be reflected in an exponential growth rate for
$V_\epsilon(f^n(\alpha))$. 

Since all the quantities we ultimately care about are invariant under a homeomorphism
of $S^2$, and since any two Cantor sets in $S^2$ are ambiently homeomorphic, we may fix
the following model once and for all. 

\begin{convention}\label{convention:pants}
Fix a metric on $S^2$ (for instance, the round
metric of constant curvature $1$) and fix three pairwise disjoint closed disks 
$A^+,A^-,B_\infty \subset S^2$ so that $\infty \in \inte(B_\infty)$ and $K$ is disjoint
from $B_\infty$ as in \S~\ref{section:taut_position}, and $K$ is written as the disjoint 
union of two Cantor sets $K^+$ and $K^-$ where each of $K^\pm$ is contained in $\inte(A^\pm)$.

Let $P$ be the complementary pair of pants, i.e.\/ 
$P:=S^2 - \inte(A^+) - \inte(A^-) - \inte(B_\infty)$.
Choose $\myepsilon>0$ so that every topologically essential proper embedded arc in $P$ has
diameter at least $\myepsilon$ in the given metric (the existence of such an $\myepsilon$ is
obvious).

Finally, fix two disjoint rays $r^\pm$ from $\infty$ to $K$ 
so that the intersection of each of the $r^\pm$
with $P$ is a single essential embedded arc, with $r^+$ running from $\partial B_\infty$
to $\partial A^+$ and $r^-$ running from $\partial B_\infty$ to $\partial A^-$.
\end{convention}

\begin{figure}[htbp]
\centering
\begin{tikzpicture}[scale=1.15]
\draw[thick] (0,1.9) circle (0.85);
\node at (0,2.18) {$B_\infty$};
\fill (0,1.60) circle (0.06); \node[below] at (0,1.58) {$\infty$};
\draw[thick] (-2.5,-1.1) circle (1.05);
\node at (-3.85,-1.85) {$A^-$};
\draw[thick] (2.5,-1.1) circle (1.05);
\node at (2.50,-0.42) {$A^+$};
\node at (-3.45,1.55) {$P$};
\foreach \x/\y in {-2.9/-0.85, -2.6/-1.0, -2.2/-0.75, -2.85/-1.3, -2.35/-1.25}
   {\fill[blue] (\x,\y) circle (0.05);}
\node[blue] at (-2.5,-1.68) {$K^-$};
\foreach \x/\y in {2.1/-0.8, 2.45/-0.95, 2.85/-0.8, 2.3/-1.3, 2.75/-1.3}
   {\fill[blue] (\x,\y) circle (0.05);}
\node[blue] at (2.5,-1.68) {$K^+$};
\draw[very thick] (-0.6,1.29) -- (-1.75,-0.35);
\draw[very thick] (-1.75,-0.35) .. controls (-2.1,-0.65) .. (-2.35,-1.25);
\node at (-1.40,0.78) {$r^-$};
\draw[very thick] (0.6,1.29) -- (1.75,-0.35);
\draw[very thick] (1.75,-0.35) .. controls (2.0,-0.6) and (2.6,-0.5) .. (2.75,-1.3);
\node at (1.40,0.78) {$r^+$};
\node at (1.62,0.02) {$\sigma$};
\draw[thick,red] plot [smooth,tension=0.7] coordinates
  {(-1.898,-1.960) (-0.400,-2.340) (1.525,-2.789) (2.670,-3.043)
   (3.753,-2.594) (4.384,-1.605) (4.332,-0.433) (3.618,0.497) (2.839,0.820)
   (2.200,0.950) (0.950,0.720) (0.200,0.460) (-1.548,-0.656)};
\draw[thick,red] plot [smooth,tension=0.7] coordinates
  {(-1.758,-1.842) (-0.300,-2.120) (1.690,-2.503) (2.641,-2.714)
   (3.541,-2.341) (4.065,-1.519) (4.022,-0.546) (3.429,0.227) (2.781,0.495)
   (2.100,0.630) (1.150,0.430) (0.250,0.190) (-1.486,-0.828)};
\draw[thick,red] plot [smooth,tension=0.7] coordinates
  {(-1.640,-1.702) (-0.200,-1.900) (1.840,-2.243) (2.615,-2.415)
   (3.348,-2.111) (3.775,-1.442) (3.740,-0.649) (3.257,-0.019) (2.729,0.200)
   (2.000,0.310) (1.200,0.160) (0.300,-0.040) (-1.454,-1.008)};
\node[red] at (2.70,-3.42) {$\alpha$};
\node[red] at (2.30,1.30) {$\alpha$};
\end{tikzpicture}
\caption{The three-disk model of Convention~\ref{convention:pants}, with
$\sigma = r^+ \cap P$.  The three components of $\alpha \cap P$ shown in red each
run from $\partial A^-$ to itself, separating $\partial B_\infty$ from
$\partial A^+$, and are therefore forced to cross $\sigma$.}
\label{figure:pants}
\end{figure}

\begin{definition}[Essential Complexity]
For $\alpha$ a ray from $\infty$ to $K$ let $e(\alpha)$ denote the number of components of 
$\alpha \cap P$ that are essential in $P$.
\end{definition}

\begin{lemma}\label{lemma:count_essential}
For any ray $\alpha$ we have the inequality $e(\alpha) \le V_\myepsilon(\alpha)$.
\end{lemma}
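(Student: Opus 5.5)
The inequality should follow directly from the definitions, by exhibiting $e(\alpha)$ disjoint subintervals of the parameter domain whose images each have diameter at least $\myepsilon$. Parametrize $\alpha$ by $[0,1]$. The set $\alpha^{-1}(P)$ is a closed subset of $[0,1]$. Since $\alpha$ is embedded, each component of $\alpha\cap P$ is the image of a component of $\alpha^{-1}(P)$, and distinct components of $\alpha \cap P$ have disjoint preimages. In particular, the preimages of the $e(\alpha)$ essential components are pairwise disjoint closed subintervals $I_1,\dots,I_{e(\alpha)}$ of $[0,1]$. (If $\alpha$ has infinitely many components in $P$, only finitely many can be essential, or we simply pick any finite number of them; the argument below then bounds each such finite count by $V_\myepsilon(\alpha)$, which is finite by Lemma~\ref{lemma:V_finite}.)

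It then remains to check that $\diam(\alpha(I_j)) \ge \myepsilon$ for each $j$. By definition, an essential component of $\alpha\cap P$ is a topologically essential proper embedded arc in $P$. Here we use that the interior of $\alpha$ avoids $K$ and $\infty$, so it meets $\partial P$ only at endpoints after a small perturbation of general position; if necessary we first isotope $\alpha$ to be transverse to $\partial P$. By the choice of $\myepsilon$ in Convention~\ref{convention:pants}, every such arc has diameter at least $\myepsilon$. Hence the intervals $I_j$ witness $V_\myepsilon(\alpha) \ge e(\alpha)$.

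The only point needing care, and so the main obstacle, is the meaning of ``component of $\alpha\cap P$'' when $\alpha$ is not transverse to $\partial P$. The cleanest approach is to define $e(\alpha)$ only for $\alpha$ transverse to $\partial A^\pm \cup \partial B_\infty$, or to count components of $\alpha\cap P$ that are arcs with endpoints on $\partial P$ and essential. Either way the components are closed arcs, so their preimages are closed intervals. Essentiality only strengthens the diameter bound: we never need inessential components, and subarcs touching $\partial P$ tangentially could just be discarded, since dropping them only decreases the count being compared against $V_\myepsilon(\alpha)$.
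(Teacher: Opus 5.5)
This is correct and is the paper's argument, which simply says the inequality follows from the definitions of $\myepsilon$ and $V_\myepsilon$. The parameter intervals of the essential components of $\alpha\cap P$ are pairwise disjoint with images of diameter at least $\myepsilon$, and your use of finite subfamilies together with Lemma~\ref{lemma:V_finite} also justifies the paper's remark that only finitely many components are essential.

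You should drop the suggestion of first isotoping $\alpha$ to be transverse to $\partial P$. It is illegitimate, because neither $V_\myepsilon$ nor $e$ is an isotopy invariant. It is also unnecessary, because every nondegenerate component of $\alpha\cap P$ is already a closed subarc with endpoints on $\partial P$, so the choice of $\myepsilon$ applies to it directly.
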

\begin{proof}
This follows from the definition of $\myepsilon$ and of $V_\myepsilon$. Notice that
although $\alpha \cap P$ may contain infinitely many components, only finitely many
of these will be essential.
\end{proof}

\begin{lemma}\label{lemma:pants_arcs}
Let $A$ be a finite collection of embedded essential arcs in $P$ and let $\nu$ be
a fixed embedded essential arc in $P$. Then we may freely isotop $A$ rel. $\partial P$ 
(i.e.\/ with endpoints free to move in $\partial P$) to a new embedded essential 
collection $A'$ so that each arc of $A'$ intersects $\nu$ transversely in at most two
points. 
\end{lemma}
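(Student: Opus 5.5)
Put a hyperbolic metric with totally geodesic boundary on $P$ (a pair of pants), and straighten each arc of $A$ to the unique orthogeodesic in its free isotopy class rel.\ $\partial P$; do the same to $\nu$ (isotoping $\nu$ instead is harmless up to an ambient isotopy, which we then pull back so that $\nu$ is literally fixed). Disjoint essential arcs have disjoint geodesic representatives, so $A'$ is still embedded. The arcs of $A'$ and $\nu$ are then geodesics, and any two distinct ones meet transversely and are in minimal position: there are no bigons or half-bigons, by Gauss--Bonnet. It therefore suffices to show that any two essential arcs in $P$ have geometric intersection number at most $2$ (rel.\ free endpoints). If an arc of $A$ is isotopic to $\nu$ itself, take a parallel disjoint copy, giving $0$ intersections.

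To bound the intersection number, I would use the combinatorial classification of essential arcs in a pair of pants. Up to free isotopy rel.\ $\partial P$ there are exactly six classes. For each pair of distinct boundary components there is a unique \emph{seam} joining them, giving three seams. For each boundary component there is a unique \emph{return arc} from that component to itself separating the other two, giving three more. This holds because $P$ cut along the three seams is two hexagons, and any arc in minimal position with the seams is determined by its sequence of crossings. Minimality forces that sequence to have length at most $1$, since the surface has no twisting about boundaries when endpoints are free.

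Then I check the $\binom{6}{2}$ pairs directly:
\begin{itemize}
\item Distinct seams are disjoint: $0$ intersections.
\item A seam and a return arc meet in $0$ points if the return arc's base component is not an endpoint of the seam, or is the seam's other\ldots{} More precisely, a return arc based at one component and a seam joining the other two meet in $2$ points; otherwise they meet in $0$ points (the seam can be pushed off), or in $1$ point.
\item Two return arcs based at different components meet in $2$ points.
\end{itemize}
Each count is realized by the doubled-hexagon picture, and minimality follows from the bigon criterion. Hence every pair meets in at most $2$ points, giving the lemma.

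The main obstacle is justifying the classification of essential arcs rigorously, that is, that there are only finitely many classes because free endpoints kill Dehn twists about the boundary. I would argue this via the hexagon decomposition: a geodesic arc crossing a seam twice would produce, together with a boundary component, a disk region contradicting the Gauss--Bonnet angle sum. An alternative that avoids the classification is to bound the intersection of two orthogeodesics directly by their crossings through the two hexagons.
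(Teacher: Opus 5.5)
Your approach is essentially the paper's: make $\nu$ an orthogeodesic for a hyperbolic structure on $P$ with geodesic boundary, and straighten the arcs of $A$ to orthogeodesics in their six possible free isotopy classes, which are then in minimal position. The pairwise count you add, which the paper leaves implicit, has one wrong entry. Write $a,b,c$ for the boundary components, $s_{xy}$ for the seams and $r_x$ for the return arcs. The arc $r_a$ separates $b$ from $c$, so $s_{bc}$ must cross it an odd number of times; the doubled-hexagon picture shows it crosses exactly once. The seams $s_{ab}$ and $s_{ac}$ lie in the complementary annuli and miss $r_a$ entirely. So the seam/return entries are $1$ and $0$, not $2$, and your garbled ``otherwise'' clause should simply read $0$. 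The value $2$ occurs only for two distinct return arcs, so the bound of at most two points still holds.

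There are two smaller points. First, ``$A'$ is still embedded'' is false when $A$ contains several parallel arcs in one class, since they all straighten to the same orthogeodesic. You must replace them by disjoint nearby parallel copies; this is the paper's ``tiny perturbation''. Each copy meets $\nu$ in the same number of points, because distinct orthogeodesics are transverse and cannot share an endpoint on $\partial P$ (both would leave the boundary perpendicularly and hence coincide). Second, straightening arc by arc only gives individual isotopies, whereas the lemma, and its use in Proposition~\ref{prop:key_geometric}, needs the collection moved together. You can get this by quoting that disjoint arc systems with the same classes and multiplicities are ambiently isotopic. Alternatively, as the paper does, use a flow (curve shortening) that keeps the arcs embedded and disjoint throughout.
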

\begin{proof}
There are only six free isotopy classes of essential arcs in $P$, one for each choice
of which components of $\partial P$ the endpoints live on. If we fix a hyperbolic
structure on $P$ with totally geodesic boundary, these six arcs are represented by 
orthogeodesics and the orthogeodesic representatives minimize the intersection numbers. 
Let's suppose $\nu$ itself is already an orthogeodesic. Then we may flow
components of $A$ by curve shortening, keeping them embedded and disjoint, until they 
converge to their orthogeodesic representatives; a tiny perturbation will give the
desired configuration.
\end{proof}

\begin{definition}
A ray $\alpha$ is {\em taut} with respect to $P$ if $\alpha$ is transverse to 
$\partial P$, if every component of $\alpha \cap P$ is essential in $P$, and if every
proper component of $\alpha \cap A^\pm$ is essential in $A^\pm - K$.
\end{definition}

\begin{proposition}\label{prop:normal_form}
Every ray $\alpha$ is isotopic to a ray $\alpha''$ taut with respect to $P$ of complexity
$e(\alpha'') \le V_\myepsilon(\alpha)$.
\end{proposition}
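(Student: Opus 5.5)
The plan is to build $\alpha''$ from $\alpha$ by a sequence of isotopies, each of which keeps $e$ from increasing, and then to quote Lemma~\ref{lemma:count_essential} for the inequality $e(\alpha) \le V_\myepsilon(\alpha)$. Since $e(\alpha'') \le e(\alpha)$, the bound $e(\alpha'') \le V_\myepsilon(\alpha)$ will follow. One point to watch is that $V_\myepsilon$ is not an isotopy invariant. This is why the bound is stated in terms of the original $\alpha$, and why we must make sure that no step ever increases the number of essential components.

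First I perturb $\alpha$ slightly so that it is transverse to $\partial P$. A generic small perturbation does not change which essential arcs of $\alpha \cap P$ exist. The only thing that can happen is that inessential arcs get split or merged, and we can choose the perturbation so small that $e$ does not increase. After this, $\alpha \cap P$ is a finite union of arcs. This needs some care: near the endpoint in $K$ the ray stays inside the interior of $A^\pm$, and near $\infty$ it lies in $B_\infty$. Consequently only finitely many components meet $\partial P$, and the earlier remark that infinitely many components could occur no longer applies once we have transversality.

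Next I remove inessential components, and I do this by an innermost argument. An inessential component of $\alpha \cap P$ cobounds a disk $D$ in $P$ with an arc of $\partial P$. I choose such a component whose disk $D$ is innermost. Then I push it across $D$ into the adjacent disk $A^\pm$ or $B_\infty$. This removes two intersection points with $\partial P$. It can merge two arcs of $\alpha\cap(A^\pm\cup B_\infty)$, but it never creates new components inside $P$, so $e$ does not increase.

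Symmetrically, a proper component of $\alpha \cap A^\pm$ that is inessential in $A^\pm - K$ cobounds, with an arc of $\partial A^\pm$, a disk containing no point of $K$. Pushing it across that disk into $P$ merges two arcs of $\alpha\cap P$ into one. This is the step I expect to be the main obstacle. The merged arc could a priori be essential even though the two original pieces were not, or two essential pieces could merge into one, and I must check that the count $e$ does not go up. I plan to argue as follows. If the merged arc is essential, then at least one of the two pieces was essential or the pair was joined across an inessential bigon. Inessential pieces have already been cleared, so both pieces adjacent to the bigon were essential. Hence the merge replaces two essential arcs by at most one, and $e$ does not increase.

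Components in $B_\infty$ are handled the same way as in $A^\pm$; there, every proper component is inessential except the initial arc from $\infty$. Each move reduces $|\alpha\cap\partial P|$, which is finite, so the process terminates. At termination the ray is transverse to $\partial P$, every arc of $\alpha\cap P$ is essential, and every proper arc of $\alpha\cap A^\pm$ is essential in $A^\pm - K$, so it is taut with respect to $P$. Optionally I would then apply Lemma~\ref{lemma:pants_arcs} to put the essential arcs in standard form, though that is not needed for this statement.
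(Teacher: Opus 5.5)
Your proposal is correct and is essentially the paper's proof. Both arguments perturb to transversality with $\partial P$, compress innermost inessential arcs of $\alpha\cap P$ out of $P$, and compress innermost arcs of $\alpha\cap A^\pm$ that are inessential in $A^\pm-K$ into $P$. Both terminate because each move lowers $|\alpha\cap\partial P|$ by two, and both conclude with $e(\alpha'')\le e(\alpha)\le V_\eta(\alpha)$ via Lemma~\ref{lemma:count_essential}.

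Your point that an $A^\pm$-compression done after all inessential arcs of $\alpha\cap P$ are cleared merges two essential arcs into at most one makes explicit the monotonicity of $e$ that the paper only asserts; the treatment of $B_\infty$ is harmless but not needed. As in the paper, the initial perturbation has to be chosen rather than generic: an inessential arc of $\alpha\cap P$ tangent to $\partial P$ can split into two essential arcs under a generic push.
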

\begin{proof}
We may perturb $\alpha$ an arbitrarily small amount to a new ray $\alpha'$ which is
smooth and transverse to $\partial P$. Every innermost inessential arc of $\alpha' \cap P$
may be compressed in the complement of the essential arcs into $B_\infty$ or one of $A^\pm$. 
Likewise, every innermost inessential arc of $\alpha \cap A^\pm - K$ can be compressed
into $P$. Each compression reduces the number of intersections with $\partial P$.
Compressions of inessential arcs in $A^\pm - K$ connects up two arcs of $P$ (and possibly
makes them inessential). Thus this process terminates at some $\alpha''$ which has no
more essential arcs than $\alpha$. Thus $e(\alpha'') \le e(\alpha) \le V_\myepsilon(\alpha)$.
\end{proof}

\subsection{The key estimate}

\begin{proposition}\label{prop:key_geometric}
Let the ray $\alpha$ be taut with respect to $P$ and of complexity $e(\alpha)=e$.
Then there is a ray $\mu$ disjoint from $r^-$ and transverse to $\alpha$ with
$|\mu \cap \alpha| \le 3e$.

Consequently $d_\RR(r^-,\alpha) \le \log_2{3e} + 3$. 
\end{proposition}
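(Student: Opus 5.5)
The ray $\mu$ will be built so that it stays inside $B_\infty \cup P \cup A^+$ and passes through $P$ only once, essentially. The starting point is $\sigma := r^+ \cap P$, the essential arc of $P$ from $\partial B_\infty$ to $\partial A^+$. Two steps come first.
\begin{enumerate}
\item Apply Lemma~\ref{lemma:pants_arcs} to the collection of $e$ components of $\alpha\cap P$ and to an arc $\nu$ isotopic to $\sigma$, chosen disjoint from $r^-\cap P$. These components are all essential, since $\alpha$ is taut with respect to $P$.
\item Equivalently, keep $\alpha$ fixed and isotope $\nu$ within $P$, rel $\partial P$ with free endpoints and staying disjoint from $r^-$. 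After this, $\nu$ meets each component of $\alpha\cap P$ in at most two points, so $|\nu\cap\alpha|\le 2e$.
\end{enumerate}
Moving $\nu$ rather than $\alpha$ is the safer choice, since it keeps $\alpha$ taut. I expect that the hyperbolic-orthogeodesic argument in the proof of Lemma~\ref{lemma:pants_arcs} also lets us take $\nu$ to be the orthogeodesic, disjoint from the orthogeodesic representative of $r^-\cap P$.

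Next, extend $\nu$ at both ends. At the $\partial B_\infty$ end, join it to $\infty$ by a radial segment in $B_\infty$, or in the convention of \S\ref{section:taut_position} simply stop on $\partial\Sigma$. We may position $\alpha \cap B_\infty$ as a single radial segment, so this costs nothing. At the $\partial A^+$ end, we need an arc $\theta\subset A^+$ from the endpoint of $\nu$ to a point of $K^+$ that is disjoint from $r^-$ (automatic, since $r^-$ misses $A^+$) and meets $\alpha\cap A^+$ few times.

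The components of $\alpha\cap A^+$ are essential arcs in $A^+-K$ together with possibly one terminal arc ending in $K^+$. Each proper arc enters and exits through $\partial A^+$, and the number of proper arcs is at most about $e$, since each arises between two consecutive essential arcs of $\alpha \cap P$ ending on $\partial A^+$. These disjoint arcs cut $A^+$ into regions, and the dual graph is a tree. Each essential arc cuts off a region containing points of $K^+$, so there is a region adjacent to $\partial A^+$ containing points of $K$. Choose $\theta$ to run along $\partial A^+$ to such a region and end at a point of $K^+$ there.

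The crossings of $\theta$ with $\alpha$ can be pushed back into a collar of $\partial A^+$, so that they become crossings with short arcs near endpoints of $\alpha\cap P$ on $\partial A^+$. This gives at most $e$ additional intersections. The resulting ray $\mu=\nu\cup\theta$ then satisfies $|\mu\cap\alpha|\le 2e+e=3e$.

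For the consequence, $d_\RR(r^-,\mu)\le 1$ because $\mu$ and $r^-$ are disjoint. Tauten the pair $(\mu,\alpha)$ by Lemma~\ref{lemma:taut} and apply Corollary~\ref{cor:log_bound} to get $d_\RR(\mu,\alpha)\le\log_2 3e+2$; the triangle inequality then gives $d_\RR(r^-,\alpha)\le\log_2{3e}+3$. Two trivial cases need separate handling. If $e=0$, or if $\mu$ and $\alpha$ are disjoint, the distance is at most $2$. If the endpoints of $\mu$ and $\alpha$ in $K$ coincide, they are just the shared endpoints allowed in general position.

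The main obstacle is the bookkeeping at the $A^+$ end. We must choose $\theta$ so that the total contribution from $A^+$ is at most $e$, rather than the naive bound of one crossing per boundary point, which could be $2e$. If that fails, I would instead use the freedom of the endpoint of $\nu$ on $\partial A^+$ and choose it inside an outermost complementary region of $\alpha\cap A^+$ containing $K$. That way only the crossings inside $P$, counted at most twice per arc, plus at most one per arc on the way, are charged.
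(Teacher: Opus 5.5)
You follow the paper's construction. You use $\sigma=r^+\cap P$, made to meet $\alpha\cap P$ at most $2e$ times via Lemma~\ref{lemma:pants_arcs}, then a segment in $B_\infty$, then an arc in $A^+$ running along $\partial A^+$ into a complementary region of $\alpha\cap A^+$ that contains a point of $K$. However, the one inequality that produces the constant $3e$ is asserted, not proved. You write that the $A^+$ end gives ``at most $e$ additional intersections'' with no reason, and your last paragraph concedes you only see one crossing per point of $\alpha\cap\partial A^+$, i.e.\ up to $2e$. Your fallback of sliding the endpoint of $\nu$ along $\partial A^+$ does not improve this. Each point of $\alpha\cap\partial A^+$ that the endpoint passes still adds one crossing, with the arc of $\alpha\cap P$ ending there, so the count is unchanged. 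As written, your argument gives $|\mu\cap\alpha|\le 4e$. That would still suffice for Theorem A, but not for the proposition as stated.

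The missing observation is that $\partial A^+$ is a circle. The endpoint of $\nu$ and the point where $\theta$ enters your chosen region cut $\partial A^+$ into two arcs. The points of $\alpha\cap\partial A^+$ number at most $2e$, since each is an endpoint of one of the $e$ essential arcs of $\alpha\cap P$. They are distributed between the two arcs, so one of them contains at most $e$ of these points; route $\theta$ along that one, pushed slightly into $A^+$. This is exactly Step 3 of the paper. Equivalently, you could route $\theta$ through the interior of $A^+$ along a geodesic in the dual tree of the chords of $\alpha\cap A^+$. That path crosses each of the at most $e$ chords at most once.

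A smaller point concerns your choice to move $\nu$ with $\alpha$ fixed. Lemma~\ref{lemma:pants_arcs}, conjugated by the inverse ambient isotopy, does not keep $\nu$ disjoint from the fixed $r^-$, because the isotopy may slide the endpoint of $\nu$ on $\partial B_\infty$ past that of $r^-$. This is harmless: either carry $r^-$ along by the same isotopy, which gives the same vertex of $\RR$, or isotope $\alpha$ in a collar of $P$ as the paper does, which preserves tautness.
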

\begin{proof}
We prove this in a sequence of steps.

\medskip

\noindent{\bf Step 1.} Let $\sigma\subset P$ be the arc $r^+\cap P$; this is an essential
arc, disjoint from $r^-$, running from $\partial B_\infty$ to $\partial A^+$. The
first step is to freely isotop $\alpha$ (by an isotopy supported
in a thin collar neighborhood of $P$ disjoint from $K$) so that $\alpha \cap P$ 
intersects $\sigma$ transversely in at most $2e$ points. This can be done
by Lemma~\ref{lemma:pants_arcs}.

\medskip

\noindent{\bf Step 2.} After the isotopy in Step 1, the 
intersection of $\alpha$ with $A^+$ is a system of at most
$e+1$ disjoint compact arcs, at most one of which ends on
a point of $K$, and such that all of the others are essential in $A^+-K$. 
We claim that we can choose an embedded path 
$\sigma'$ in $A^+$ from some initial point $\sigma'(0)\in \partial A^+$ to some point 
$\sigma'(1) \in K$, and so that $\sigma'$ is disjoint from $\alpha$ except possible at 
$\sigma'(1)$. To see this, choose an innermost bigon region and observe that it must
contain some points of $K$, or else some arc would be inessential.

\medskip

\noindent{\bf Step 3.} We may join $\sigma(1)$ to $\sigma'(0)$ by an arc of
$\partial A^+$ that intersects at most half of the points of $\alpha \cap \partial A^+$,
i.e.\/ at most $e$ points. This union can be extended in $B_\infty$ to a proper
ray $\mu$, disjoint from $r^-$, and transverse to $\alpha$ with $|\mu \cap \alpha| \le 3e$.
Now apply Corollary \ref{cor:log_bound}, after tautening the pair $(\mu,\alpha)$ using Lemma
\ref{lemma:taut}.
\end{proof}

\section{Proof of the theorems}\label{section:proof}

Putting this together proves the main theorems:

\begin{proof}
Let $f$ be a diffeomorphism of $S^2$ representing $\gamma$ and choose $r^-$ smooth
as in Convention~\ref{convention:pants}. By Proposition~\ref{prop:normal_form}
each $f^n(r^-)$ is isotopic to a ray taut with respect to $P$ of complexity $e(f^n(r^-))$,
so by Proposition~\ref{prop:key_geometric} we have
$$d_\RR(r^-,\gamma^n(r^-)) \le \log_2{3e(f^n(r^-))} +3$$
and $e(f^n(r^-)) \le V_{\myepsilon}(f^n(r^-))$ by Lemma~\ref{lemma:count_essential}. Hence
$$\tau(\gamma) \le \frac {1} {\log 2} \limsup_{n \to \infty} \frac {\log V_\myepsilon(f^n(r^-))} {n} \le \frac {h(f)}{\log 2}$$
where the last inequality follows by Yomdin's theorem \cite{Yomdin} (see also \cite{Gromov}).
This proves Theorem A.

If $f$ is merely a homeomorphism then the last inequality is the statement of our main
conjecture, and Theorem B follows in the same way.
\end{proof}

\begin{remark}
The $C^\infty$ hypothesis in Yomdin's theorem cannot be weakened to $C^r$, by
examples of Misiurewicz; the reverse inequality, that entropy is at most the
volume growth rate, is due to Newhouse \cite{Newhouse} and needs only
$C^{1+\epsilon}$.  So for $C^\infty$ diffeomorphisms entropy and volume growth
agree, and $h(f)$ may be replaced in Theorem A by the length growth rate of a
single arc.
\end{remark}

\section{Stretch factors}\label{section:hierarchy}

\subsection{A hierarchy}

Fix a metric on $S^2$ and let $\gamma \in \Gamma$.  For a path $\alpha$ and a
homeomorphism $f$ put
$$\sigma_\epsilon(f,\alpha) := \limsup_{n\to\infty}\frac{\log V_\epsilon(f^n(\alpha))}{n},
\qquad\quad \sigma(f,\alpha) := \lim_{\epsilon\to 0}\sigma_\epsilon(f,\alpha),$$
and let $\sigma^{\mathrm{iso}}(f,\alpha)$ be defined in the same way with
$V_\epsilon(f^n(\alpha))$ replaced by the infimum of $V_\epsilon$ over rays
isotopic to $f^n(\alpha)$.  Consider the following quantities.
\begin{enumerate}
\item $h_\infty(\gamma)$, the infimum of $h(f)$ over $C^\infty$ representatives;
\item $h(\gamma)$, the infimum of $h(f)$ over all representatives;
\item the infimum over representatives $f$ of the supremum of
$\sigma(f,\alpha)$ over compact arcs $\alpha$;
\item the same, with the supremum restricted to rays;
\item the infimum over representatives of the {\em infimum} of $\sigma(f,R)$
over rays $R$;
\item the same with $\sigma$ replaced by $\sigma^{\mathrm{iso}}$;
\item $\log 2 \cdot \tau(\gamma)$.
\end{enumerate}
Trivially $(1)\ge(2)$ and $(3)\ge(4)\ge(5)\ge(6)$.  Yomdin's theorem gives
$(1)\ge(3)$, and \S\S\ref{section:divide_by_2}--\ref{section:proof} give
$(6)\ge(7)$; these are the two inputs to Theorem A, which is the composite
$(1)\ge(3)\ge\cdots\ge(7)$.  The Conjecture is exactly the assertion
$(2)\ge(3)$, and Theorem B is the resulting composite
$(2)\ge(3)\ge\cdots\ge(7)$.

So Theorem A says that no quantity in the hierarchy from (1) down can be smaller
than $\log 2 \cdot \tau(\gamma)$, while the Conjecture would insert $h(\gamma)$
into the chain at the top, which is what the sharp form of the inequality
requires.  We do not know whether any of the inequalities above is an equality; in
particular we do not know whether $h(\gamma) = h_\infty(\gamma)$, that is, whether
the infimal entropy in a mapping class is approximated by smooth representatives
preserving a smoothly embedded Cantor set.  A positive answer would give the sharp
inequality without the Conjecture.

\subsection{Evidence for an unconditional Theorem B}

One route to prove an unconditional Theorem B would be to prove our main Conjecture.
As we have explained, this Conjecture is true for $C^\infty$ diffeomorphisms by
Yomdin \cite{Yomdin}, and there are
partial results in intermediate regularity \cite{Zang}. Examples due to Misiurewicz
which invalidate Yomdin's theorem with regularity below $C^\infty$ are irrelevant
to the conjecture, since they inflate the length of arcs by inserting tiny
wiggles that are invisible at the level of $V_\epsilon$. So we think there is a 
decent chance the conjecture is true.

Independently of the truth of this conjecture, there are routes to try to establish
an unconditional Theorem B. For surfaces of finite type, a result of this
kind is known (as we already remarked): if $g$ is a homeomorphism of a compact surface $S$ with a
finite invariant set $F$, then the exponential growth rate of the geometric
intersection numbers $i(g^n(\alpha),\beta)$ of any two curves $\alpha$, $\beta$ in $S - F$ 
is at least $\log\lambda$ where $\lambda$ is the stretch factor for the Thurston representative 
of the class, so that $h(g)\ge\log\lambda$ (\cite{Fathi_Shub, Handel}). Therefore one is led to look
for an analogue of Thurston's classification in the infinite type setting 
(see \cite{Bestvina_Fanoni_Tao} for partial results in this direction) and
a monotone comparison theorem for entropy.

Since Theorem B is vacuously true for elements which are not loxodromic, let's
focus attention on loxodromic $\gamma$. Suppose $f$ is a representative homeomorphism
of $\gamma$. A lower bound on the entropy of $f$ may be obtained by exhibiting 
$f$-invariant horseshoes, i.e.\/ rectangles together with a family of horizontal
strips whose $f$-images are vertical strips crossing the horizontal strips
transversely. 

If $\gamma$ is loxodromic, after passing to a power if necessary, it fixes two
points in the Gromov boundary $\partial_\infty\RR$ which (by \cite{Bavard_Walker})
are represented by attracting and repelling {\em high filling long
rays} $R^+$ and $R^-$. One hopes that subarcs of $R^+$ and $R^-$ will form sides of
rectangles as above and therefore certify a lower bound for entropy for any
representative $g$ of $\gamma$ preserving $R^+$ and $R^-$.

To compare the entropy of $g$ to that of an arbitrary representative $f$ one needs
an infinite type analog of Handel's global shadowing theorem \cite{Handel}.
Here one might hope to adapt Handel's fixed point theorem for planar homeomorphisms
\cite{Handel_fixed_point}, whose hypotheses involve only finitely many orbits and
their cyclic order at infinity, data which the pair $R^{\pm}$ supplies. Another
potential tool is the forcing theory of Le Calvez and Tal
\cite{Le_Calvez_Tal}, which produces topological horseshoes for $C^0$ surface
homeomorphisms out of the combinatorial complexity of transverse trajectories.

\section{Acknowledgements}

We would like to thank Autumn Kent and Saul Schleimer for useful conversations
about this material. The figures in this paper and some minor editing and 
proofreading of the prose were done with the assistance of Claude (Anthropic) 
but the authors take full intellectual responsibility for the content.

\sloppy

\end{document}